 \newtheorem{thm}{Theorem}[section]
 \newtheorem{cor}[thm]{Corollary}
  \newtheorem{hyp}[thm]{Hypothesis}
 \newtheorem{prop}[thm]{Proposition}
 \theoremstyle{definition}
 \theoremstyle{remark}
 \newtheorem{rem}[thm]{Remark}
 \newtheorem*{ex}{Example}
 \numberwithin{equation}{section}
\newcommand{\R}{{\mathbb R}}
\newcommand{\N}{{\mathbb N}}
\newcommand{\tr}{\mathrm{Tr}}
\newcommand{\one}{1\!\!\!\;\mathrm{l}}
\newcommand{\A}{\mathcal{A}}
\newcommand{\eps}{\varepsilon}
\numberwithin{equation}{section}
\begin{document}

\title[Compactness and asymptotic behavior]
 {Compactness and asymptotic behavior in nonautonomous linear parabolic equations with unbounded coefficients in $\R^d$}

\author[A. Lunardi]{Alessandra Lunardi}

\address{Dipartimento di Matematica\\ Viale G.P. Usberti, 53/A \\
I-43100 Parma\\ Italia}
 
\email{alessandra.lunardi@unipr.it}
 
\subjclass[2000]{Primary 35D40; Secondary 28C10, 47D07}
\keywords{Evolution operator, compactness, asymptotic behavior.}

\begin{abstract}
We consider a class of second order linear nonautonomous parabolic equations in $\R^d$ with time periodic unbounded coefficients. We give sufficient conditions for the evolution operator $G(t,s)$ be compact in $C_b(\R^d)$ for $t>s$, and describe the asymptotic behavior of $G(t,s)f$ as $t-s \to \infty$ in terms of a family of measures $\mu_s$, $s\in \R$, solution of the associated Fokker-Planck equation.  
\end{abstract}

\maketitle

\section{Introduction}
 
Linear nonautonomous parabolic equations in $\R^d$ are a classical subject in the mathematical literature. Most papers and books  about regular solutions are devoted to the case of bounded coefficients (e.g., \cite{LSU,friedman}, but the list is very long), and recently the interest towards unbounded coefficients grew up. The standard motivations to the study of unbounded coefficients are  on one side the well known connections with stochastic ODEs with unbounded nonlinearities, and on the other side  the changes of variables that transform bounded into unbounded coefficients, occurring in different mathematical models. 
However, only for a few equations with unbounded coefficients
 it is possible to recover the familiar results about the bounded coefficients case. Many of them exhibit very different, and at first glance surprising, aspects. Therefore, a third motivation    is the 
interest in new phenomena in PDEs. 

This paper deals with one of these new phenomena, giving sufficient conditions in order that  the  evolution operator $G(t,s)$ associated to a class of second order parabolic equations  is a  compact  contraction in $C_b(\R^d)$ for $t>s$. Precisely,   Cauchy problems   such as 
\begin{equation}
u_t(t,x) = \A(t)u(t, \cdot)(x), \quad t>s, \;x\in \R^d,
\label{equazione}
\end{equation}
\begin{equation}
u (s,x) =\varphi(x), \quad x\in \R^d
\label{Cauchy}
\end{equation}
will be considered, where the elliptic operators $\A(t)$ are defined by 
\begin{align}
\label{A(t)}
(\A(t)\varphi) (x) &: =\sum_{i,j=1}^d
q_{ij}(t,x)D_{ij}\varphi (x) + \sum_{i=1}^d b_i(t,x) D_i\varphi
(x)\notag\\
&: = \tr \left( Q(t,x) D^2 \varphi (x)\right) + \langle b(t,x),
\nabla \varphi (x) \rangle ,
\end{align}
and the (smooth enough) coefficients $q_{ij}$, $b_i$ are allowed to be unbounded. 
If $\varphi$ is smooth and it has compact support, a  classical bounded solution to  \eqref{equazione}--\eqref{Cauchy} is readily constructed, as the limit as $R\to \infty$ of the solutions $u_R$ of Cauchy-Dirichlet problems in the balls $B(0,R)$. However, classical bounded solutions  need not be unique. Under assumptions that guarantee positivity preserving in \eqref{equazione}--\eqref{Cauchy} (and hence, uniqueness of its bounded classical solution), a basic study of the evolution operator $G(t,s)$ for \eqref{equazione} in $C_b(\R^d)$ is in the paper \cite{KLL}. The evolution operator turns out to be markovian, since it has the representation
$$G(t,s)\varphi(x) = \int_{\R^d} \varphi(y) p_{t,s,x}(dy), \quad t>s, \;x\in \R^d, \;\varphi\in C_b(\R^d),$$
where the probability measures $p_{t,s,x}$ are given by $p_{t,s,x}(dy) = g(t,s,x,y)dy$ for a positive function $g$.

It is easy to see that if a markovian $G(t,s)$ is compact  in $C_b(\R^d)$, then it does not preserve $C_0(\R^d)$, the space of the continuos functions vanishing as $|x|\to \infty$, and it cannot be extended to a bounded operator in  $L^p(\R^d, dx)$ for $1\leq p<\infty$. Therefore, much of the theory developed for bounded coefficients fails. 

When a parabolic problem is not well posed in $L^p$ spaces with respect to the Lebesgue measure, it is natural to look for other measures $\mu$, and in particular to weighted Lebesgue measures, such that $G(t,s)$ acts in $L^p(\R^d, \mu)$. This is well understood in the autonomous case $\A(t)\equiv \A$, where the dynamics is held  by a semigroup $T(t)$ and $G(t,s) = T(t-s)$. Then, an important role is played by {\em invariant measures}, that are Borel probability measures $\mu$ such that 
$$\int_{\R^d} T(t)\varphi\,d\mu = \int_{\R^d}\varphi\,d\mu, \quad \varphi \in C_b(\R^d).$$
If a Markov semigroup has an invariant measure $\mu$, it can be extended in a standard way to a contraction semigroup in all the spaces $L^p(\R^d, \mu)$, $1\leq p<\infty$.  
Under broad assumptions the invariant measure is unique, and it is strongly related with the asymptotic behavior of $T(t)$, since $\lim_{t\to \infty} T(t)\varphi = \int_{\R^d}\varphi \,d\mu$, locally uniformly  if $\varphi\in C_b(\R^d)$ and in $L^p(\R^d, \mu)$ if $\varphi \in  L^p(\R^d, \mu)$, $1<p<\infty$.  
In the nonautonomous case the role of the invariant measure is played by families of measures $\{ \mu_s:\; s\in \R\}$,  called {\em evolution systems of measures}, that satisfy
\begin{equation}
\int_{\R^d} G(t,s)\varphi\,d\mu_t = \int_{\R^d}\varphi\,d\mu_s, \quad t>s, \; \varphi \in C_b(\R^d).
\label{invariance}
\end{equation}
If \eqref{invariance} is satisfied, the function $s\mapsto \mu_s$ satisfies (at least, formally) the Fokker-Planck equation
$$D_s \mu_s + \A(s)^*\mu_s = 0, \quad s\in \R, $$
which is  a parabolic equation for measures without any initial, or final, condition. Therefore it is natural to have infinitely many solutions, and to look for uniqueness of special solutions. For instance, in the autonomous case the unique stationary solution is the invariant measure, in the periodic case $\A(t) = \A(t+T)$ under reasonable assumptions there is a unique $T$-periodic solution, etc. 
Arguing as in the autonomous case, it is easy to see that if \eqref{invariance} holds then $G(t,s)$ may be extended to a contraction from $L^p(\R^d, \mu_s)$ to $L^p(\R^d, \mu_t)$ for $t>s$. Therefore, it is natural to investigate asymptotic behavior of $G(t,s)$ not only in $C_b(\R^d)$ but also in these $L^p$ spaces.

A basic study of the evolution operator for parabolic equations with (smooth enough) unbounded coefficients is in \cite{KLL}. In its sequel \cite{LLZ} we studied asymptotic behavior of $G(t,s)$ in the case of time-periodic coefficients. 

In this paper   sufficient conditions will be given for the evolution operator $G(t,s)$ be compact in $C_b(\R^d)$. Then, compactness will be used to obtain asymptotic behavior results in the case of time-periodic coefficients. Indeed, compactness implies that there exists a unique $T$-periodic evolution system of measures $\{ \mu_s:\; s\in \R\}$, and that 
denoting by $m_s\varphi $ the mean value
\begin{equation}
\label{m_s} 
m_s\varphi  := \int_{\R^d} \varphi(x) \mu_s(dx), \quad s\in \R ,
\end{equation}
 there is $\omega <0$ such that for each $\eps >0$ we have 
\begin{equation}
\label{CompAsInfty} \| G(t,s)\varphi - m_s\varphi\|_{\infty}
\leq M_{\eps} e^{(\omega +\eps) (t-s)} \|\varphi\|_{\infty} , \quad t>s,
\; \varphi \in C_b(\R^d),
\end{equation}
for some $ M_{\eps} >0$. As a consequence,  for
every $p\in (1, \infty)$ and $\eps >0$  we get
\begin{equation}
\label{CompAsExp} \| G(t,s)\varphi - m_s\varphi\|_{L^p(\R^d, \mu_t)}
\leq Me^{(\omega +\eps) (t-s)} \|\varphi\|_{L^p(\R^d, \mu_s)} , \quad t>s,
\; \varphi \in L^p(\R^d, \mu_s), 
\end{equation}
for some $M= M(p, \eps)>0$. Note that while the constant $M$ may depend on $p$, the exponential rate of decay is independent of $p$. 
These results complement the asymptotic behavior results of \cite{LLZ}, where \eqref{CompAsExp} was obtained under different assumptions.


\section{Preliminaries: the evolution operator  $G(t,s)$.}


We use standard notations. $C_b(\R^d)$ is the space of the bounded continuous functions from $\R^d$ to $\R$, endowed with the sup norm. 
$C_0(\R^d)$ is the space of the continuous functions that vanish at infinity. For $k\in \N\cup \{0\}$, $C^k_c(\R^d)$ is the space of $k$ times differentiable functions with compact support. For $0<\alpha <1$, $a<b$ and $R>0$, $C^{\alpha/2, \alpha}([a,b]\times B(0,R))$ and $C^{1+\alpha/2, 2+ \alpha}([a,b]\times B(0,R))$ are the usual parabolic H\"older spaces in the set $[a,b]\times B(0,R)$; 
$C^{\alpha/2, \alpha}_{loc}(\R^{1+d})$ and $C^{1+\alpha/2, 2+ \alpha}_{loc}(\R^{1+d})$ are the subspaces of  $C_b(\R^{1+d})$ consisting of functions whose restrictions to $[a,b]\times B(0,R)$ belong to $C^{\alpha/2, \alpha}([a,b]\times B(0,R))$ and to $C^{1+\alpha/2, 2+ \alpha}([a,b]\times B(0,R))$, respectively, for every $a<b$ and $R>0$. 

\vspace{2mm}

In this section we recall some results from \cite{KLL}  about the evolution operator for parabolic equations with unbounded coefficients. 
They were proved under standard regularity and ellipticity assumptions, and nonstandard qualitative assumptions.

\begin{hyp}
\label{hyp1} 
\begin{itemize} 
\item[(i)]
The coefficients $q_{ij}$,  $b_i$ $(i,j=1,\ldots,d)$ belong to $C^{\alpha/2, \alpha}_{loc}(\R^{1+d})$ for some $\alpha\in (0, 1)$.
\item[(ii)]
For every $(s,x)\in  \R^{1+d}$,
the matrix $Q(s,x)$ is symmetric and there exists a function
$\eta:  \R^{1+d}\to \R$ such that
$0<\eta_0:=\inf_{\R^{1+d}}\eta$ and
\begin{eqnarray*}
\langle Q(s,x)\xi , \xi \rangle \geq \eta(s,x) |\xi |^2, \qquad\;\,
\xi \in \R^d,\;\,(s,x) \in  \R^{1+d}.
\end{eqnarray*}
\item[(iii)]
There exist a positive function $W  \in C^2(\R^d)$ and a number $\lambda \in \R$ such that 
\begin{eqnarray*}
\qquad\;\, \lim_{|x|\to \infty} W (x) = \infty \quad \mbox{
and } \quad \sup_{s\in \R, \,x\in \R^d}(\A(s) W)( x) -\lambda W(x)< 0.
\end{eqnarray*}
\end{itemize}
\end{hyp}

Assumptions (i) and (ii) imply that for every $s\in \R$ and $\varphi\in C_b(\R^d)$, the Cauchy problem
\begin{equation}
\label{CP}
\left\{ \begin{array}{rcll}
\displaystyle  D_tu(t,x) &  =  & \A (t)u(t,x), & t>s,\; x \in   \R^{d},\\[2mm]
u(s,x) &  =  & \varphi(x), & x\in \R^d, 
\end{array}\right.
\end{equation}
has a bounded classical solution. 
Assumption (iii) implies that the bounded classical solution to \eqref{CP} is unique  (in fact, a maximum principle that yields uniqueness is proved in \cite{KLL} under a slightly weaker assumption). The evolution operator $G(t,s)$ is defined by 
\begin{equation}
\label{(t,s)}
G(t,s)\varphi   = u(t, \cdot), \quad t\geq s \in \R , 
\end{equation}
where $u $ is the unique bounded solution to \eqref{CP}. Some of the properties of $G(t,s)$  are in next theorem.

\begin{thm}
\label{Th:G(t,s)} 
Let Hypothesis  \ref{hyp1} hold. Define $ \Lambda
:= \{(t,s,x)\in \R^{2+d}: t > s, \;x\in \R^{d}\} $. Then:
\begin{itemize} 
\item[(i)]
for every $\varphi \in C_b(\R^d)$, the function $(t,s,x)\mapsto
G(t,s)\varphi(x)$ is continuous in $\overline{\Lambda }$. For each
$s\in \R$,   $(t,x)\mapsto G(t,s)\varphi(x)$ belongs to
$C^{1+\alpha/2, 2+\alpha}_{loc}((s, \infty) \times \R^d )$;
\item[(ii)]
for every $\varphi\in C^{2}_{c}( \R^d )$, the function
$(t,s,x)\mapsto G(t,s)\varphi(x)$ is continuously differentiable
with respect to $s$ in $\overline{\Lambda }$ and
$D_sG(t,s)\varphi(x) =   -G(t,s)\A(s)\varphi(x)$ for any
$(t,s,x)\in\overline\Lambda$;
\item[(iii)]
for each $(t,s,x)\in \Lambda$ there exists a Borel probability measure $p_{t,s,x}$ in $\R^d$ such that
\begin{equation}
\label{representation}
G(t,s)\varphi(x) = \int_{\R^d}\varphi (y)
p_{t,s,x}(dy), \quad \varphi \in C_b(\R^d).
\end{equation}
Moreover, $ p_{t,s,x}(dy) = g(t,s,x,y)dy$ for a positive function
$g$.  
\item[(iv)]
$G(t,s)$ is strong Feller;  extending it to  $L^{\infty}(\R^d, dx)$ through  formula \eqref{representation}, it maps
  $L^{\infty}(\R^d, dx)$ (and, in particular, $B_b(\R^d)$) into $C_b(\R^d)$ for $t>s$, and
\begin{eqnarray*}
\|G(t,s)\varphi \|_{\infty} \leq \|\varphi \|_{\infty}, \quad
\varphi \in L^{\infty}(\R^d, dx), \;\,t>s. 
\end{eqnarray*}
\item[(v)]  If $(\varphi_n)$ is a bounded sequence in  $C_b(\R^d)$ that converges uniformly to $\varphi $ in each compact set $K\subset\R^d$, then for each $s\in \R$ and $T>0$, $G(\cdot,s)\varphi _n$
converges to $G(\cdot , s )\varphi $ uniformly in
$[s,s+T] \times K$, for each   compact set $K \subset \R^d$.
\item[(vi)] 
For every $s\in \R$ and $R>0$, $0<\eps <T$ there is $C= C(s,\eps, T, R)>0$ such that 
$$\sup_{ s+\eps \leq t\leq s+T}\|G(t,s)\varphi \|_{C^2(B(0,R))}  \leq C\|\varphi\|_{\infty} , \quad \varphi \in C_b(\R^d).$$
\end{itemize}
\end{thm}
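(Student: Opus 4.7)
The proof follows the classical approximation-by-bounded-domains scheme, with the Lyapunov function $W$ playing the central role in handling the unboundedness.

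The plan is to first construct $G(t,s)$ as a locally uniform limit of evolution operators $G_n(t,s)$ obtained by solving Cauchy--Dirichlet problems for \eqref{equazione} in the cylinders $(s,\infty)\times B(0,n)$. For $\varphi\in C^2_c(\R^d)$ with support in $B(0,n_0)$, standard parabolic theory (e.g.\ the Ladyzhenskaya--Solonnikov--Ural'tseva estimates) yields classical solutions $u_n\in C^{1+\alpha/2,2+\alpha}_{\rm loc}$ of the Dirichlet problem. The maximum principle gives $\|u_n\|_\infty\le\|\varphi\|_\infty$. Interior Schauder estimates then provide bounds on $u_n$ in $C^{1+\alpha/2,2+\alpha}$ on every parabolic subcylinder uniformly in $n\ge n_0$; a diagonal compactness argument extracts a subsequence converging in $C^{1,2}_{\rm loc}$ to a bounded classical solution $u$ of \eqref{CP}. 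For general $\varphi\in C_b(\R^d)$ one approximates by a bounded sequence in $C^2_c(\R^d)$ converging locally uniformly and uses the same compactness.

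The crucial step is uniqueness of bounded classical solutions, which upgrades the construction to a well-defined operator $G(t,s)$ and yields part (v). This is where Hypothesis \ref{hyp1}(iii) enters: if $v$ is a bounded classical solution of \eqref{CP} with $\varphi\equiv 0$, one compares $\pm v$ with $\eps\, e^{\lambda(t-s)} W(x)$ on each bounded cylinder $[s,T]\times B(0,R)$. The function $(t,x)\mapsto \eps\, e^{\lambda(t-s)}W(x)-v(t,x)$ satisfies $D_t z - \A(t)z\ge 0$, vanishes (up to a positive term) at $t=s$, and its lateral values tend to $+\infty$ as $R\to\infty$ because $W\to\infty$ and $v$ is bounded. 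The parabolic maximum principle gives $v\le\eps\, e^{\lambda(t-s)}W$, and letting $\eps\to 0$ forces $v\equiv 0$. Uniqueness immediately implies that the whole sequence $u_n$ converges, proving (v); and plugging $\varphi_n\equiv 1$ into this shows $G(t,s)1=1$, hence $G(t,s)$ is positivity-preserving and Markov, which combined with Riesz representation on $C_b(\R^d)$ gives the probability measures $p_{t,s,x}$ of (iii).

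For the remaining regularity statements (i), (ii), (iv), (vi): interior Schauder estimates, applied to the bounded solution $u=G(\cdot,s)\varphi$ on $[s+\eps,s+T]\times B(0,2R)$ together with the sup bound $\|u\|_\infty\le\|\varphi\|_\infty$, immediately yield (vi) and the $C^{1+\alpha/2,2+\alpha}_{\rm loc}$ regularity in $(t,x)$ stated in (i); joint continuity on $\overline\Lambda$ then follows from the local uniform convergence in (v) applied to the family $\varphi_n:=\varphi$ and a simple time-shift argument together with the continuity of the coefficients. For (ii) with $\varphi\in C^2_c(\R^d)$, one differentiates the relation $G(t,s)\varphi-\varphi=\int_s^t D_\tau(G(t,\tau)\varphi)\,d\tau$ in $s$: using $\A(\tau)\varphi\in C_b(\R^d)$ one obtains, via approximation by regularized data and uniqueness, $D_s G(t,s)\varphi=-G(t,s)\A(s)\varphi$ with the claimed continuity. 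Existence of a positive density $g$ for $p_{t,s,x}$ follows from a parabolic Harnack inequality together with the interior regularity of $G(t,s)$ (alternatively, from De Giorgi--Nash--Moser bounds applied to the fundamental solutions $g_n$ of the truncated problems, which give equicontinuous positive densities converging to $g$). Finally, (iv) is a consequence of (iii): given $\varphi\in L^\infty(\R^d,dx)$, formula \eqref{representation} defines $G(t,s)\varphi$; taking $\varphi_k\in C_b(\R^d)$ bounded by $\|\varphi\|_\infty$ and converging to $\varphi$ a.e., dominated convergence (applied to $g(t,s,x,y)dy$) together with (v) yield continuity of $G(t,s)\varphi$ in $x$ for $t>s$.

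The main technical obstacle is the uniqueness step: without the Lyapunov function $W$, examples with nonunique bounded solutions are classical (e.g.\ the Tychonoff-type constructions), so Hypothesis \ref{hyp1}(iii) must be used precisely in the maximum principle argument above. Once uniqueness is secured, everything else follows by the standard interplay between Schauder estimates, the strong maximum principle, and Riesz representation.
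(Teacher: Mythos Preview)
Your sketch is essentially correct and follows the strategy of \cite{KLL}, which is precisely what the paper does here: this theorem is not proved in the paper but is a collection of results quoted from \cite{KLL} (Thm.~2.1, Prop.~2.4, Cor.~2.5, Prop.~3.1, Lemma~3.2), with only statement (vi) singled out as being ``hidden'' in the proof of \cite[Thm.~2.1]{KLL} via the three-step approximation ($C^{2+\alpha}_c \to C_0 \to C_b$) and interior Schauder estimates --- exactly the mechanism you describe. So your outline and the paper's source agree in substance.

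One minor point: your argument for (ii) is slightly circular as written. You propose to differentiate $G(t,s)\varphi-\varphi=\int_s^t D_\tau(G(t,\tau)\varphi)\,d\tau$ in $s$, but the integrand already presupposes the $s$-differentiability you are trying to establish. The cleaner route (and the one taken in \cite{KLL}) is to write, for $h>0$,
\[
G(t,s+h)\varphi - G(t,s)\varphi = G(t,s+h)\bigl[\varphi - G(s+h,s)\varphi\bigr],
\]
and use that for $\varphi\in C^2_c(\R^d)$ the inner bracket equals $-h\,\A(s)\varphi + o(h)$ locally uniformly (this is where the regularity of the coefficients and the fact that $\varphi$ has compact support are used), together with the continuity property (v) for $G(t,s+h)$. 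This yields $D_sG(t,s)\varphi=-G(t,s)\A(s)\varphi$ directly.
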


Statements (i) to (v) are explicitly mentioned in \cite{KLL} (Thm. 2.1, Prop. 2.4, Cor. 2.5, Prop. 3.1, Lemma 3.2). Statement (vi) is hidden in the proof of Theorem 2.1, where $G(t,s)\varphi$ is obtained by an approximation procedure, in three steps: first, for $\varphi\in C^{2+\alpha}_{c}(\R^d)$, then for 
$\varphi\in C_{0}(\R^d)$, and then for $\varphi \in C_b(\R^d)$. At each step, we have interior Schauder estimates for a sequence $u_n$ that approaches $G(t,s)\varphi$, namely for  $s\in \R$ and $R>0$, $0<\eps <T$ there is $C= C(s,\eps, T, R)>0$ such that 
$$\|u_n\|_{C^{1+\alpha/2 , 2+\alpha}([s+\eps, s+T]\times B(0,R))} \leq C\|\varphi\|_{\infty}, \quad n\in \N , $$
and $u_n$ converges to $G(t,s)\varphi$ locally uniformly. This yields (vi). 

\vspace{3mm}

 To get evolution system of measures we have to  strenghten  assumption \ref{hyp1}(iii).   The following theorem is proved in \cite{KLL}.

\begin{thm}
\label{evsyst}
Under  Hypotheses \ref{hyp1}, assume in addition that  there exist a positive function $W  \in C^2(\R^d)$ and numbers
$a$,  $c>0$    such that
\begin{equation}
\label{V}
\qquad\;\, \lim_{|x|\to \infty} W (x) = +\infty \quad \mbox{
and } \quad (\A(s) W)( x) \leq a - c W(x), \quad (s,x)\in   \R^{1+d}.
\end{equation}
Then there exists a tight$^($\footnote{i.e.,   $\forall \eps
>0$   $\exists R=R(\eps)>0$ such that
$
\mu_s(B(0,R)) \geq 1-\eps$, for all $s\in\R$.}$^)$
 evolution system of measures $\{ \mu_s: s\in
\R\}$ for $G(t,s)$.  Moreover,
\begin{equation}
\label{G(t,s)V}
G(t,s)W(x) := \int_{\R^d}W(y) p_{t,s,x}(dy) \leq
W(x) + \frac{a}{c}, \quad t>s, \;x\in \R^d,
\end{equation}
and
\begin{equation}
\label{m_tV}
  \int_{\R^d}W(y)\mu_t(dy) \leq \min W +
\frac{a}{c}, \quad t\in \R .
\end{equation}
\end{thm}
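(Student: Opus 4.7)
The proof rests on two ingredients: a Lyapunov/integrability estimate from \eqref{V}, and a weak-compactness extraction producing the family $\{\mu_s\}$.

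\textbf{Step 1 (the integrability bound \eqref{G(t,s)V}).} Since $W$ is unbounded, $G(t,s)W(x)$ is a priori only the possibly infinite integral $\int_{\R^d}W\,dp_{t,s,x}$. I would control it by comparison with the explicit supersolution
$$\bar v(t,x):=e^{-c(t-s)}W(x)+\frac{a}{c}\bigl(1-e^{-c(t-s)}\bigr),$$
for which a direct calculation gives $\partial_t\bar v-\A(t)\bar v=e^{-c(t-s)}(a-cW-\A(t)W)\ge 0$ by \eqref{V}. Using the Cauchy--Dirichlet approximants $v_R$ on $(s,\infty)\times B(0,R)$ with $v_R(s,\cdot)=W$ and $v_R=0$ on the lateral boundary---the same approximants employed in the construction of $G(t,s)$ in \cite{KLL} and satisfying $v_R\nearrow G(t,s)W$ as $R\to\infty$---the classical parabolic maximum principle gives $v_R\le\bar v$; passing $R\to\infty$ yields both the claimed bound $G(t,s)W\le W+a/c$ and the sharper estimate $G(t,s)W\le\bar v$.

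\textbf{Step 2 (the family $\mu_s$ and the invariance \eqref{invariance}).} Fix $x_0$ minimizing $W$; such a point exists because $W\in C^2$ blows up at infinity. Step 1 gives $\int W\,dp_{t,s,x_0}\le W(x_0)+a/c$ for all $t>s$, and the coercivity of $W$ together with Chebyshev makes $\{p_{t,s,x_0}:t>s\}$ tight, uniformly for $s$ varying in any bounded interval. Picking a countable dense $S\subset\R$ and applying Prokhorov with a diagonal extraction, I would obtain a single sequence $t_n\uparrow+\infty$ such that $p_{t_n,s,x_0}\to\mu_s$ weakly for every $s\in S$. For $s<t$ in $S$ and $\varphi\in C_b(\R^d)$, the cocycle $G(t_n,s)=G(t_n,t)\,G(t,s)$ together with \eqref{representation} yields
$$\int G(t,s)\varphi\,dp_{t_n,t,x_0}=G(t_n,t)\bigl[G(t,s)\varphi\bigr](x_0)=G(t_n,s)\varphi(x_0)=\int\varphi\,dp_{t_n,s,x_0};$$
since $G(t,s)\varphi\in C_b(\R^d)$ by Theorem \ref{Th:G(t,s)}, passing $n\to\infty$ gives \eqref{invariance} for $s,t\in S$. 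For arbitrary $s\in\R$ I would define $\mu_s$ by $\int\varphi\,d\mu_s:=\int G(s_0,s)\varphi\,d\mu_{s_0}$ for any $s_0\in S$ with $s_0>s$; the cocycle makes this independent of $s_0$, and \eqref{invariance} then extends to all $t>s$.

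\textbf{Step 3 (the bound \eqref{m_tV} and tightness).} Fatou together with the sharp form of Step 1 gives, for $s_0\in S$,
$$\int W\,d\mu_{s_0}\le\liminf_n G(t_n,s_0)W(x_0)\le\liminf_n\bar v(t_n,x_0)=\frac{a}{c}\le\min W+\frac{a}{c},$$
and applying \eqref{invariance} to the truncations $W_R:=W\wedge R$, passing $R\to\infty$ by monotone convergence, and then inserting $G(s_0,s)W\le\bar v$ propagates the same bound to every $s\in\R$. Tightness of $\{\mu_s\}$ is then immediate from $W\to\infty$ and Chebyshev. The most delicate point of the whole argument is this extraction-plus-extension: one must arrange a \emph{single} sequence $t_n\to\infty$ that produces consistent weak limits $\mu_s$ for a dense set of $s$, and verify that both the invariance relation and the moment bound transfer intact to the full family.
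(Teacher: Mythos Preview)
The paper does not contain its own proof of this theorem: it is quoted from \cite{KLL} (see the sentence ``The following theorem is proved in \cite{KLL}'' immediately preceding the statement), so there is nothing in the present paper against which to compare your attempt.

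Judged on its own merits, your sketch is the standard Lyapunov/Krylov--Bogoliubov construction and is correct in outline. Step~1 is clean; the supersolution $\bar v$ is the right barrier and even yields the sharper estimate $G(t,s)W\le e^{-c(t-s)}W+\tfrac{a}{c}(1-e^{-c(t-s)})$, from which \eqref{G(t,s)V} is immediate. In Step~2 the Prokhorov/diagonal extraction and the extension of $\mu_s$ to $s\notin S$ via the cocycle are the usual moves; you should add one line checking that each $\mu_s$ so defined is a genuine probability measure (no mass escapes at infinity), which follows from the uniform moment bound together with $G(s_0,s)\one=\one$. In Step~3 your chain of inequalities actually gives the stronger bound $\int W\,d\mu_s\le a/c$, which of course implies \eqref{m_tV} since $W>0$. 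One small loose end: the qualifier ``uniformly for $s$ varying in any bounded interval'' is unnecessary, because the bound $G(t,s)W(x_0)\le W(x_0)+a/c$ from Step~1 is already uniform over all $t>s$ with no restriction on $s$.
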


\section{Compactness in $C_b(\R^d)$}

A necessary and sufficient condition for $G(t,s)$ be compact in
$C_b(\R^d)$ for $t>s$ is very similar to the corresponding condition
in the autonomous case (\cite{MPW}).

\begin{prop}
\label{tight-compattezza} Under Hypothesis \ref{hyp1}  
the following statements are equivalent:
\begin{itemize} 
\item[(a)]
for any $t>s$, $G(t,s):  C_b(\R^d)\to C_b(\R^d)$ is compact.
\item[(b)]
for any $t>s$ the family of measures $\{p_{t,s,x}(dy):\; x\in
\R^d\}$ is tight, i.e., for every  $\eps >0$ there exists
$R=R(t,s,\eps)>0$ such that
\begin{eqnarray*}
p_{t,s,x}(B(0,R)) \geq 1-\eps, \quad x\in \R^d.
\end{eqnarray*}
\end{itemize}
\end{prop}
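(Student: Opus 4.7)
The plan is to treat the two implications separately.

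For $(a)\Rightarrow(b)$ I would argue by contradiction. If $\{p_{t,s,x}\}_{x\in\R^d}$ fails to be tight for some pair $t>s$, there exist $\eps_0>0$, radii $R_n\uparrow\infty$, and points $x_n\in\R^d$ with $p_{t,s,x_n}(\R^d\setminus B(0,R_n))\geq\eps_0$. I would pick continuous radial cutoffs $\psi_n\in C_b(\R^d)$ with $0\leq\psi_n\leq 1$, vanishing on $B(0,R_n/2)$ and identically $1$ on $\R^d\setminus B(0,R_n)$. Then $\|\psi_n\|_\infty\leq 1$ and $\psi_n\to 0$ locally uniformly, so Theorem~\ref{Th:G(t,s)}(v) yields $G(t,s)\psi_n\to 0$ locally uniformly; any sup-norm convergent subsequence must then tend to $0$. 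But
$$G(t,s)\psi_n(x_n)\geq \int_{\R^d\setminus B(0,R_n)}\psi_n\,dp_{t,s,x_n}=p_{t,s,x_n}(\R^d\setminus B(0,R_n))\geq\eps_0,$$
so $\|G(t,s)\psi_n\|_\infty\geq\eps_0$ for every $n$, contradicting compactness of $G(t,s)$.

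For $(b)\Rightarrow(a)$ I would factor the evolution through an intermediate time. Fix $r\in(s,t)$; by the evolution identity $G(t,s)=G(t,r)\,G(r,s)$. Given a bounded sequence $(\varphi_n)\subset C_b(\R^d)$ with $\|\varphi_n\|_\infty\leq M$, set $w_n:=G(r,s)\varphi_n$, so that $\|w_n\|_\infty\leq M$ and $G(t,s)\varphi_n=G(t,r)w_n$. Theorem~\ref{Th:G(t,s)}(vi) applied to $G(r,s)$ yields a uniform-in-$n$ bound on $\|w_n\|_{C^2(B(0,\rho))}$ for every $\rho>0$, and a standard Ascoli--Arzel\`a and diagonal extraction produces a subsequence (still denoted $w_n$) converging locally uniformly to some $w\in C_b(\R^d)$ with $\|w\|_\infty\leq M$. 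To upgrade to convergence of $G(t,r)w_n$ in the sup norm, given $\eps>0$ I would invoke tightness of $\{p_{t,r,x}\}_x$ (which is part of hypothesis (b)) to pick $R>0$ with $p_{t,r,x}(\R^d\setminus B(0,R))<\eps$ uniformly in $x$; then splitting the integral gives
$$\bigl|G(t,r)w_{n}(x)-G(t,r)w(x)\bigr|\leq\sup_{y\in B(0,R)}|w_{n}(y)-w(y)|+2M\,p_{t,r,x}(\R^d\setminus B(0,R))$$
for every $x\in\R^d$, so the right-hand side is at most $(1+2M)\eps$ once $n$ is large. This yields $G(t,s)\varphi_n\to G(t,r)w$ in $C_b(\R^d)$, hence (a).

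The principal obstacle is precisely the passage from local to global uniform convergence in the second implication: a bounded sequence in $C_b(\R^d)$ has no intrinsic equicontinuity, so one cannot extract a convergent subsequence of $(\varphi_n)$ directly. The semigroup factorization sidesteps this by shifting attention to $w_n=G(r,s)\varphi_n$, which is equicontinuous on every compact set by the interior Schauder-type estimates of (vi); tightness of $\{p_{t,r,x}\}_x$ then supplies the uniform control at infinity that compensates for the non-compactness of $\R^d$.
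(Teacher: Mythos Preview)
Your proof is correct and rests on the same ingredients as the paper's. For $(a)\Rightarrow(b)$ the paper uses cutoffs $\varphi_R\uparrow\one$ rather than your $\psi_n\downarrow 0$, and for $(b)\Rightarrow(a)$ it packages the same factorization through $r\in(s,t)$ as an operator-norm approximation $G_R\to G(t,s)$ by the compact truncations $G_R\varphi(x)=\int_{B(0,R)}G(r,s)\varphi\,dp_{t,r,x}$ instead of your direct sequential extraction; the underlying mechanism---interior Schauder estimates (Theorem~\ref{Th:G(t,s)}(vi)) for local compactness, together with tightness of $\{p_{t,r,x}\}_x$ for tail control---is identical.
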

\begin{proof}
We follow the proof given in \cite[Prop.~ 3.6]{MPW} for the autonomous case.

Let statement (a) hold. For every $R >0$ let $\varphi_R:\R^d\to\R$
be a continuous function such that $\one _{B(0,R)} \leq
\varphi_R\leq \one _{B(0,R+1)}$. Since $\|\varphi_R\|_{\infty} \leq
1$ and $G(t,s)$ is compact, there is a  sequence $G(t,s)
\varphi_{R_n}$ that converges uniformly in the whole $\R^d$ to a
limit function $g$. Since $\varphi_R$ goes to $\one$ as $R\to
+\infty$, uniformly on each compact set and $\|\varphi_R\|_{\infty}
\leq 1$ for every $R$, by Theorem \ref{Th:G(t,s)}(v)  $\lim_{R\to \infty} G(t,s)
\varphi_R  = G(t,s)\one = \one$ uniformly on each compact set.
Then, $g\equiv 1$ and $\lim_{R\to +\infty} \|G(t,s) \varphi_R
-\one\|_{\infty} =0$. Therefore, fixed any $\eps >0$, we have
\begin{eqnarray*}
p_{t,s,x}(B(0,R) ) = (G(t,s)\one _{B(0,R)})(x)  \geq
(G(t,s)\varphi_{R})(x) \geq 1-\eps , \quad x\in\R^d,
\end{eqnarray*}
for $R$ large enough.

Let now statement (b) hold. For $t>s$ fix $r\in (s,t)$ and recall
that (see formula \eqref{representation})
\begin{eqnarray*}
(G(t,s)\varphi)(x) = (G(t,r)G(r,s)\varphi)(x)=\int_{\R^d}
(G(r,s)\varphi) (y) p_{t,r,x}(dy),
 \end{eqnarray*}
for any $\varphi\in C_b(\R^d)$ and any  $x\in \R^d$. For every $R>0$
set
\begin{eqnarray*}
(G_{R}\varphi)(x) =  \int_{B(0,R)} (G(r,s)\varphi) (y)
p_{t,r,x}(dy), \quad x\in \R^d.
\end{eqnarray*}
Each $G_R: C_b(\R^d)\to C_b(\R^d)$ is a compact operator, since it
may be written as $G_{R} = {\mathcal S}\circ {\mathcal R} \circ
G(r,s)$ where $G(r,s): C_b(\R^d)\to C_b(\R^d)$ is continuous,
${\mathcal R}: C_b(\R^d)\to C( B(0,R))$ is the
restriction operator, and $ {\mathcal S}:
C(B(0,R)) \to C_b(\R^d)$ is defined by
\begin{eqnarray*}
{\mathcal S} \psi(x) =  \int_{B(0,R)} \psi(y)  p_{t,r,x}(dy) =
(G(t,r)\widetilde{\psi})(x),\qquad\;\,x\in B(0,R),
\end{eqnarray*}
where $\widetilde{\psi}(x)$ is the null extension of $\psi$ to the
whole $\R^d$. Now, ${\mathcal R}\circ G(s,r): C_b(\R^d)\to
C(B(0,R))$ is compact by Theorem
\ref{Th:G(t,s)}(v),  
and $ {\mathcal S}$ is continuous from $C(B(0,R))$ to
$C_b(\R^d)$ because $G(t,r)$ is strong Feller by Theorem
\ref{Th:G(t,s)}(iv). 

Moreover, $G_R\to G(t,s)$ in ${\mathcal L}(C_b(\R^d))$, as
$R\to+\infty$. Indeed, for $\eps >0$ there is  $R_0>0$   such that $
p_{t,r,x}(B(0,R))\geq 1-\eps $ for each $x\in \R^d$ and $R\geq R_0$,
and consequently
\begin{eqnarray*}
|(G(t,s)\varphi)(x) - (G_R\varphi)(x) |  \leq
\|G(r,s)\varphi\|_{\infty}\int_{\R^d\setminus B(0,R)}p_{t,r,x}(dy)
\leq \eps \|\varphi\|_{\infty},
\end{eqnarray*}
for $R\geq R_0$ and for each $x\in \R^d$.

Being limit of compact operators, $G(t,s)$ is compact. \end{proof}

\begin{rem}
\label{rem:comp}
{\rm Some remarks are in order.
\begin{itemize}
\item[(i)]
An insight in the proof shows that if  $G(t,s)$ is compact for some
$t>s$, then the family   $\{p_{t,s,x}(dy):\; x\in \R^d\}$ is tight;
conversely if for some $r>s$ the family $\{p_{r,s,x}(dy):\; x\in
\R^d\}$ is tight then $G(t,s)$ is compact for each $t>r$.
\item[(ii)]
If for some $r>s$ the family $\{p_{r,s,x}(dy):\; x\in \R^d\}$ is
tight, then the family $\{p_{t,s,x}(dy):\; t\geq r, \;x\in \R^d\}$
is tight. Indeed, for every $R>0$ we have
$$\begin{array}{lll}
p_{t,s,x}(\R^d\setminus B(0,R)) &= &(G(t,s)\one _{\R^d\setminus B(0,R)})(x)
\\
\\
&=&(G(t,r)G(r,s)\one _{\R^d\setminus B(0,R)})(x)
\\
\\
&\leq &\|G(r,s)\one _{\R^d\setminus B(0,R)}\|_{\infty},
\end{array}$$
so that, if $p_{r,s,x}(\R^d\setminus B(0,R)) = G(r,s)\one
_{\R^d\setminus B(0,R)}(x)\leq \eps$ for every $x$, also
$p_{t,s,x}(\R^d\setminus B(0,R))\leq \eps$ for every $x$.
\item[(iii)]
As in the autonomous case (\cite{MPW}), if $G(t,s)$ is compact
in $C_b(\R^d)$, it does not preserve $L^p(\R^d,dx)$ for any $p\in
[1, +\infty)$ and it does not preserve $C_0(\R^d)$. Indeed, let
$R>0$ be so large that $p_{t,s,x}(B(0,R)) \geq 1/2$ for every $x\in
\R^d$, and let $\varphi \in C_c(\R^d)$ be such that $\varphi \geq
\one_{B(0,R)}$. Then,
\begin{eqnarray*}
(G(t,s)\varphi)(x) \geq (G(t,s)\one_{B(0,R)})(x) =
p_{t,s,x}(B(0,R))\geq \frac{1}{2},
\end{eqnarray*}
for every $x$, so
that $G(t,s)\varphi $ does not belong to any space $L^p(\R^d,dx)$
and to $C_0(\R^d)$.
\item[(iv)]
A similar argument shows that $\inf G(t,s)\varphi >0$ for each $t>s$ and $\varphi \in C_b(\R^d)\setminus \{0\}$, $\varphi \geq 0$. Indeed, if $\varphi(x) >0$ for each $x$, 
and $R>0$ is as before, then $(G(t,s)\varphi)(x) \geq \delta (G(t,s)\one_{B(0,R)})(x) \geq \delta/2$, with $\delta = \min_{|x|\leq R}\varphi(x) >0$. 
If $\varphi(x) \geq 0$ for each $x$, it is sufficient to recall that $G(t,s)\varphi = G(t,(s+t)/2) G((s+t)/2, s)\varphi$ and that $G((s+t)/2, s)\varphi (x)>0$ for each $x$ by Theorem \ref{Th:G(t,s)} (iii). 
\end{itemize}
}
\end{rem}

However,  to check the tightness condition of Proposition \ref{tight-compattezza} is not obvious, since the measures $p_{t,s,x}$ are not explicit, in general. In the case of time depending Ornstein-Uhlenbeck operators (e.g. \cite{DPL}), 
$$(\A(t)\varphi)(x) = \frac12 \tr  \left(Q(t)D_x^2\varphi   (x)\right)+ \langle A(t)x+f(t),\nabla \varphi(x)\rangle ,\quad x\in\R^d,$$
the measures $p_{t,s,x}$ are explicit Gaussian measures and it is possible to see that the tightness condition does not hold. Alternatively, one can check that $G(t,s)$ maps $L^p(\R^d, dx)$ into itself for every $p\in (1, \infty)$ and therefore it cannot be compact in $C_b(\R^d)$.

If the assumptions of Theorem \ref{evsyst} hold,  estimate \eqref{G(t,s)V} implies that the family $\{
p_{t,s,x}: t>s, \;x\in B(0,r)\}$ is tight for every $r>0$. However, this is not enough for compactness. 
To obtain compactness we have to strenghten condition \eqref{V} on the auxiliary function $W$.

\begin{thm}
\label{Giorgio} Let Hypotheses  \ref{hyp1}   hold.
Assume in addition that there exist a $C^2$ function $W:\R^d\mapsto \R$, such that $\lim_{|x|\to \infty} W(x) = +\infty$, 
a number 
$R>0$ and a convex increasing
function $g: [0, +\infty)\to \R$ such that   $1/g$ is in $L^1(a,
+\infty)$ for  large $a $, and
\begin{equation}
(\A (s) W)(x) \leq -g(W(x)), \qquad\;\,s\in \R ,\;\, |x| \geq R.
\label{stima-conv}
\end{equation}
Then, for every $\delta >0$ there is $C=C(\delta) >0$ such that
$(G(t,s)W)(x) \leq C$ for  every $x\in \R^d$ and  $s\leq t-\delta $.
Consequently,  the family of probabilities $\{ p_{t,s,x}(dy): s\leq
t-\delta, \,x\in \R^d\}$ is   tight, and $G(t,s)$ is compact in
$C_b(\R^d)$  for $t>s$.
\end{thm}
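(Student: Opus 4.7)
The plan is to establish the uniform bound $(G(t,s)W)(x) \leq C(\delta)$ for $x \in \R^d$ and $t-s \geq \delta$, from which tightness and compactness follow quickly. Indeed, since $W(y)\to \infty$ as $|y|\to \infty$, the sub-level sets $\{W<\rho\}$ are contained in balls $B(0,R_\rho)$, and Chebyshev's inequality yields
\[
p_{t,s,x}\bigl(\R^d\setminus B(0,R_\rho)\bigr) \leq \frac{(G(t,s)W)(x)}{\rho} \leq \frac{C(\delta)}{\rho}
\]
for $t-s\geq \delta$, so that $\{p_{t,s,x}\}$ is tight on this range. Proposition \ref{tight-compattezza} together with Remark \ref{rem:comp}(i) then gives compactness of $G(t,s)$ for every $t>s$.

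For the bound, I fix $t\in \R$ and $x\in \R^d$ and set $\psi(\tau) := (G(t,t-\tau)W)(x)$, $\tau>0$. Theorem \ref{Th:G(t,s)}(ii) formally gives $\psi'(\tau) = (G(t,t-\tau)\A(t-\tau)W)(x)$. From the hypothesis \eqref{stima-conv} one has the pointwise bound
\[
(\A(s)W)(y) \leq -g(W(y)) + K\,\one_{B(0,R)}(y),\qquad (s,y)\in \R^{1+d},
\]
with $K$ a constant dominating $\A(s)W + g(W)$ on $\bar B(0,R)$. Using that $G(t,t-\tau)$ is a positive contraction represented by the probability measure $p_{t,t-\tau,x}$, and applying Jensen's inequality for the convex function $g$, one arrives at the scalar differential inequality
\[
\psi'(\tau) \leq -g(\psi(\tau)) + K, \qquad \psi(0) = W(x).
\]

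Comparison with the scalar ODE $y' = -g(y) + K$, $y(0) = W(x)$, gives $\psi \leq y$. Since $g$ is increasing to $+\infty$ and $1/g$ is integrable at infinity, so is $1/(g-K)$, and the function
\[
\Phi(r) := \int_r^\infty \frac{d\sigma}{g(\sigma)-K}
\]
is a decreasing bijection from $(g^{-1}(K),\infty)$ onto $(0,\infty)$. When $W(x) > g^{-1}(K)$, separation of variables yields $\Phi(y(\tau)) = \Phi(W(x)) + \tau \geq \tau$, so $y(\tau) \leq \Phi^{-1}(\tau)$; when $W(x) \leq g^{-1}(K)$ the solution is trapped below $g^{-1}(K) \leq \Phi^{-1}(\tau)$. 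In either case $\psi(\tau) \leq \Phi^{-1}(\delta) =: C(\delta)$ for $\tau \geq \delta$, uniformly in $x \in \R^d$ and $t \in \R$, which is precisely the estimate used in the first paragraph.

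The principal technical obstacle is the rigorous justification of the identity $\psi'(\tau) = (G(t,t-\tau)\A(t-\tau)W)(x)$ for the unbounded function $W$: I would approximate $W$ by a sequence of $C^2_c$ truncates $W_n$, apply Theorem \ref{Th:G(t,s)}(ii) to each $W_n$, and pass to the limit using the interior Schauder estimates of Theorem \ref{Th:G(t,s)}(vi), the representation \eqref{representation}, and monotone convergence for the measures $p_{t,t-\tau,x}$. A secondary point is that $K$ must be chosen uniformly in $s$, which requires boundedness of the Hölder coefficients of $\A(s)$ on $\bar B(0,R)$ in $s\in\R$.
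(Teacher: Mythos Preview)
Your approach is essentially the paper's: define $\psi(\tau)=(G(t,t-\tau)W)(x)$, derive a differential inequality via Jensen for the convex $g$, and compare with the scalar ODE whose solutions collapse in finite time because $1/g\in L^1$ near infinity. The Chebyshev argument for tightness and the appeal to Proposition~\ref{tight-compattezza} are exactly what the paper does.

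There is one point where the paper is more careful than your sketch suggests, and it is worth flagging. After the truncation $W_n$ and the passage to the limit, what you actually obtain is not the pointwise identity $\psi'(\tau)=(G(t,t-\tau)\A(t-\tau)W)(x)$ but only the \emph{integral} inequality
\[
\psi(b)-\psi(a)\le -\int_a^b g(\psi(\sigma))\,d\sigma,\qquad 0\le a<b,
\]
for a function $\psi$ that is a priori merely measurable (the pointwise limit of the continuous $\tau\mapsto (G(t,t-\tau)W_n)(x)$). In particular you cannot invoke a standard ODE comparison theorem, which typically requires $\psi$ to be at least absolutely continuous. The paper handles this by first noting that the integral inequality forces $\tau\mapsto\psi(\tau)+m\tau$ (with $m=\min g$) to be nonincreasing, which gives one-sided limits, and then running a direct contradiction argument on the maximal interval where $\psi>\zeta$. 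Your outline would be fine if you replaced ``$\psi'(\tau)\le -g(\psi(\tau))+K$'' by the corresponding integral statement and supplied this comparison step.

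Your secondary remark about the uniformity of $K$ in $s\in\R$ is well taken: Hypothesis~\ref{hyp1}(i) only gives local-in-time bounds on the coefficients, so one indeed needs some additional uniformity (in the paper's applications the coefficients are $T$-periodic, which settles it). The paper makes the same tacit move when it replaces $g$ by $g-C$ with a single constant $C$ valid for all $s$.
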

\begin{proof}
As a first step we show that
 \begin{equation}
 \label{maggV}
(G(t,s)W)(x)  - (G(t,r)W)(x)  \geq - \int_{r}^{s} (G(t,
\sigma)\A(\sigma) W)(x)d\sigma, \quad r<s<t, \;x\in \R^d.
\end{equation}
Let $\varphi \in C^{\infty}(\R)$ be a nonincreasing function such that $\varphi\equiv 1$ in $(-\infty , 0]$, $\varphi \equiv 0$ in $[1, + \infty)$, and define $\psi_n (t) = \int_{0}^{t} \varphi (s-n)ds$ for each $n\in \N$. The functions  $\psi_n$ are smooth and enjoy the following properties:
\begin{itemize} 
\item
$\psi_n(t) = t$ for $t \in [0, n]$,
\item
$\psi_n(t) \equiv \mathrm{const.}$ for $ t \geq n+1$,
\item
$0 \leq \psi_n' \leq 1$ and $\psi_n'' \leq 0$, 
\item
for every $t\geq 0$, the sequence $(\psi_n'(t))$ is increasing.
\end{itemize}
Then, the function $W_n := \psi_n\circ W$ belongs to
$C^2_b(\R^d)$ and it is constant outside a compact set. By Theorem 
\ref{Th:G(t,s)}(ii),  applied to $W_n - c$, we have
\begin{align*}
(G(t,s)W_n)(x) -&(G(t,r)W_n)(x) =   - \int_r^s (G(t,\sigma)\A(\sigma )W_n)(x)\, d\sigma\notag\\
&= - \int_r^s G(t,\sigma) \left \{\psi_n'(W) (\A(\sigma)
W)  + \psi_n''(W) \langle Q  \nabla W , \nabla W \rangle\right\} (x) d\sigma\notag\\
&\ge - \int_r^s G(t,\sigma)( \psi_n'(W) \A(\sigma )W)(x)d\sigma \notag\\
& = - \int_r^sd\sigma\int_{E_{\sigma}}\psi_n'(W(y))(\A(\sigma
)W)(y)p_{t,\sigma,x}(dy)
\\
&- \int_r^sd\sigma\int_{\R^d\setminus
E_{\sigma}}\psi_n'(W(y))(\A(\sigma )W)(y)p_{t,\sigma,x}(dy),
\end{align*}
where $E_{\sigma}=\{x\in\R^d: (\A(\sigma )W)(x)>0\}$. 
Letting $n\to +\infty$, the left-hand side goes to  $(G(t,s)W)(x)
-(G(t,r)W)(x)$. Concerning the right-hand side,  both integrals converge by monotone convergence. We have to prove that their limits are finite.  The first term  converges to $-\int_r^sd\sigma\int_{E_{\sigma}}(\A(\sigma
)W)(y)p_{t,\sigma,x}(dy)$, which is finite since the sets
$E_{\sigma}$ are equibounded in $\R^d$ (recall that  the function ${\mathcal A}(\sigma)W$ tends to $-\infty$
as $|x|\to +\infty$, uniformly with respect to $\sigma\in [r,s]$). The second term may be estimated 
by
\begin{align*}
&-\int_r^sd\sigma\int_{\R^d\setminus
E_{\sigma}}\psi_n'(W(y))(\A(\sigma )W)(y)p_{t,\sigma,x}(dy)\\
\le &\int_r^sd\sigma\int_{E_{\sigma}}\psi_n'(W(y))(\A(\sigma
)W)(y)p_{t,\sigma,x}(dy)+ (G(t,s)W_n)(x) -(G(t,r)W_n)(x).
\end{align*}
Letting $n\to +\infty$, we  obtain that
$\int_r^sd\sigma\int_{\R^d\setminus E_{\sigma}}(\A(\sigma
)W)(y)p_{t,\sigma,x}(dy)$ is finite. 

Summing up,  the function
$\sigma\mapsto (G(t,\sigma)(\A(\sigma)W))(x)$ is in $L^1(r,s)$ and
\eqref{maggV} follows.

Possibly replacing $g$ by $\widetilde{g} = g-C$ for a suitable
constant $C$, we may assume that  $(\A(s)W)(x) \leq -g(W(x))$ for
every $s\in \R$ and $x\in \R^d$.

Fix $x\in \R^d$, $t\in \R$, and set
\begin{eqnarray*}
\beta(s) := (G(t, t-s)W)(x), \quad s\geq 0.
\end{eqnarray*}
Then $\beta$ is measurable, since it is the  limit  of the sequence of  continuous
functions $s\mapsto G(t,t-s)W_n(x)$. Inequality \eqref{maggV} implies
$$
\beta(b)-\beta(a) \leq -\int_{t-b}^{t-a}
(G(t,\sigma)g(W))(x)d\sigma, \qquad\;\, a<b,
$$
and, since $g$ is convex,
$$\begin{array}{l}
(G(t, \sigma)g(W))(x) = \displaystyle{ \int_{\R^d} g(W(y))p_{t,\sigma, x}(dy) }
\\
\\
\geq
g\bigg(  \displaystyle{ \int_{\R^d}  W(y)p_{t,\sigma, x}(dy) } \bigg) = g((G(t,
\sigma)W)(x))
\end{array}$$
so that
\begin{equation}
\label{eq:beta}
\begin{array}{lll}
 \beta(b)-\beta(a) & \le &
 \displaystyle{-\int_{t-b}^{t-a}g((G(t,\sigma)W)(x))d\sigma }
 \\
 \\
&   = & 
 \displaystyle{-\int_{t-b}^{t-a}g(\beta(t-\sigma))d\sigma } 
 \\
 \\
 & = &  \displaystyle{ -\int_{a}^{b}
g(\beta(\sigma))d\sigma, }
\end{array}
\end{equation}
for any $a<b$. Then, for every  $s\geq 0$, $\beta(s)\leq \zeta(s)$,
where $\zeta$ is the solution of the Cauchy problem
\begin{eqnarray*}
\left\{ \begin{array}{l}
\zeta'(s) = -g(\zeta (s)), \quad s\geq 0,
\\[3mm]
\zeta(0) = W(x).
\end{array}\right.
\end{eqnarray*}
Indeed, assume by contradiction that there exists $s_0>0$ such that
$\beta(s_0)>\zeta(s_0)$, and denote by $I$ the largest interval
containing $s_0$ such that $\beta(s )>\zeta(s )$ for each $s\in
I$. 

\noindent Inequality \eqref{eq:beta} implies that $ \beta(b)-\beta(a) \leq -m (b-a)$ for $b>a$, with $m :=\min g$. In other words, the function $s\mapsto \beta(s) + ms $ is decreasing. 
This implies that  $I$ contains some left neighborhood of $s_0$. Indeed, since $s\mapsto \beta(s)+ms $ is   decreasing, then 
$$\lim_{s\to  s_0^-} \beta(s)+ms \geq \beta(s_0) + ms_0 > \zeta(s_0) + ms_0 = \lim_{s\to  s_0^-}  \zeta(s)+ms$$
so that $\lim_{s\to  s_0^-} ( \beta(s)- \zeta(s)) >0$, which yields $\beta>\zeta$ in a left neighborhood of $s_0$. 

Let    $a= \inf I $. Then $a< s_0$, and  there is a sequence $(s_n)\uparrow a$ such that $\beta(s_n) \leq \zeta(s_n) $, so that $\beta(a) + ma  \leq \lim_{n\to \infty}\beta(s_n) + ms_n  \leq \zeta(a) + ma$, that is $\beta(a) \leq \zeta(a)$. On the other hand, for each $s\in I$ we have
\begin{eqnarray*}
\beta(s) - \beta(a) \leq \int_{a}^{s} -g(\beta(\sigma))d\sigma,
\quad \zeta(s) -\zeta(a) = \int_{a}^{s} -g(\zeta(\sigma))d\sigma ,
\end{eqnarray*}
so that
\begin{eqnarray*}
\beta(s) - \zeta(s) \leq   \int_{a}^{s} [-g(\beta(\sigma))+ g(\zeta (\sigma))] \,d\sigma
, \quad s\in I.
\end{eqnarray*}
Since $\beta(\sigma) > \zeta(\sigma)$ for every $\sigma \in I$ and
$g$ is increasing, the integral in the right-hand side is
nonpositive, a contradiction. Therefore, $\beta(s)\leq \zeta(s)$ for
every $s\geq 0$. 

By standard arguments about ODE's, for every $\delta >0$ there
is 
 $C=C(\delta )$ independent on the initial datum $W(x)$ such that
$\zeta(s)\leq C$ for every $s\geq \delta$. Therefore, 
$$\beta (s)=
(G(t,t-s)W)(x)\leq C, \quad s\geq \delta ,$$
with $C$ independent of $t$. This implies that for every $\delta >0$ the family of probabilities $p_{t,s,x}(dy)$
with $s\leq t-\delta$ and $x\in \R^d$ is   tight, because for every $R>0$ we have
\begin{align*}
 p_{t,s,x}(\R^d \setminus B(0,R)) &= \int_{\R^d \setminus B(0,R)}p_{t,s,x}(dy)\\
&\le \frac{1}{\inf \{W(y):\;|y|\geq R\}}  \int_{\R^d \setminus B(0,R)}W(y)p_{t,s,x}(dy)\\
&\le \frac{1}{\inf \{W(y):\;|y|\geq R\}} (G(t,s)W)(x) \\
&\le   \frac{C}{\inf \{W(y):\;|y|\geq R\}}
\end{align*}
and $\inf \{W(y):\;|y|\geq R\} $ goes to $+\infty $ as $r\to +\infty$. So,
condition (b) of Proposition \ref{tight-compattezza} is satisfied.
\end{proof}

\begin{ex}[As in the autonomous case]
If there is $R>0$ such that 
\begin{eqnarray*}
{\rm Tr}\; Q(s,x) + \langle b(s,x),x\rangle - \frac{2}{|x|^2}
\langle Q(s,x)x, x\rangle \leq -c |x|^2(\log |x|)^{\gamma}, \quad s\in
\R ,\;\, |x|\geq R,
\end{eqnarray*}
with $c>0$ and $\gamma >1$, then the condition \eqref{stima-conv} is
satisfied by any $W$ such that $W(x) = \log |x|$ for $|x|\ge R$,
with $g(s) = cs^{\gamma}$. If the regularity and ellipticity assumptions  \ref{hyp1}(i)(ii) hold, Theorem \ref{Giorgio} implies that the evolution operator $G(t,s)$  is compact in $C_b(\R^d)$ for $t>s$. 
\end{ex}


 \section{Compactness and asymptotic behavior}


In this section we derive asymptotic behavior results from compactness of $G(t,s)$ in $C_b(\R^d)$. 

Throughout the section we assume that  Hypothesis \ref{hyp1}  holds, and that 
the coefficients $q_{ij}$ and $b_i$, $i,j=1, \ldots, d$ are periodic in time, with period $T>0$. Then the asymptotic behavior of $G(t,s)$ is driven by the spectral properties of the operators 
$$V(s) : = G(s+T, s), \quad s\in \R.$$ 
This is well known in the case of evolution operators associated to families $A(t)$ of generators of analytic semigroups, see e.g.  \cite[sect.~7.2]{Henry}, \cite[Ch.~6]{L}, \cite{F}. Most of the arguments are independent of analyticity and will be adapted to our situation. 

To begin with, since each $V(s)$ is a contraction in $C_b(\R^d)$, its spectrum is contained in the unit circle. Its spectral radius is $1$, since $1$ is an eigenvalue. The nonzero eigenvalues of $V(s)$ are independent of $s$, since the equality $G(t,s)V(s) = V(t)G(t,s)$ implies that for each eigenfunction $\varphi $ of $V(s)$, $G(t,s)\varphi \neq 0$ is an eigenfunction of $V(t)$ with the same eigenvalue, for $t>s$. 

 If $G(t,s)$ is compact in $C_b(\R^d)$ for $t>s$, then  $\sigma (V(s))\setminus \{0\}$ consists of isolated eigenvalues, hence it is independent of $s$. Therefore, 
\begin{equation}
\label{r}
\sup\{|\lambda| : \;\lambda\in \sigma(V(s)), \; |\lambda|<1, \;s\in \R\} := r <1.
\end{equation}
Denoting  by $Q(s)$ the spectral projection
$$Q(s) = \frac{1}{2\pi i} \int_{\partial B(0,a)} (\lambda I- V(s))^{-1}d\lambda , \quad s\in \R , $$
with any $a\in (r, 1)$,  it is not difficult to see that for every $\eps >0$ there is $M_{\eps}>0$ such that 
\begin{equation}
\label{decexp}
\|G(t,s)Q(s)\varphi  \|_{\infty} \leq M_{\eps}e^{(t-s)(\log r(s) +\eps)/T}\|\varphi\|_{\infty}, \quad t>s, \;\varphi \in C_b(\R^d). 
\end{equation}
(The proof may be obtained from the proof of  \eqref{decexp2}  in Proposition \ref{comp_as_Lp}, replacing the $L^p$ spaces considered there by $C_b(\R^d)$). 

In the proof of the next proposition we use an important corollary of the Krein-Rutman Theorem, whose proof may be found in e.g. \cite[Ch.~1]{Du}. 
 
 \begin{thm}
\label{KR}
Let $K$ be a cone with nonempty interior part $\widetilde{K}$ in a Banach space $X$, and let $L:X\mapsto X$ be a linear compact operator such that $L\varphi \in \widetilde{K}$ for each $\varphi \in K\setminus\{0\}$. Then the spectral radius $r$ of $L$ is a simple eigenvalue of $L$, and all the other eigenvalues 
have modulus $<r$. 
\end{thm}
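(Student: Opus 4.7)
I would proceed in three stages: (i) establish $r>0$ and the existence of a positive eigenvector at $r$; (ii) upgrade to geometric and algebraic simplicity; (iii) prove strict dominance. For (i), fix $u_0\in\oo{K}$; by hypothesis $Lu_0\in\oo{K}$, and because $u_0$ is an interior point too, one finds $\alpha>0$ with $Lu_0-\alpha u_0\in K$. Iterating, $L^nu_0\geq\alpha^n u_0$ in the cone order; since a cone with nonempty interior is normal, this yields $\|L^nu_0\|\geq c\alpha^n\|u_0\|$, whence $r\geq\alpha>0$. Next, apply the classical Krein--Rutman theorem to produce $\varphi\in K\setminus\{0\}$ with $L\varphi=r\varphi$: for each $\varepsilon>0$ the perturbed map $x\mapsto(Lx+\varepsilon u_0)/\|Lx+\varepsilon u_0\|$ is a continuous self-map of $\{x\in K:\|x\|=1\}$ with relatively compact image (compactness of $L$), so Schauder's theorem provides a fixed point; passing $\varepsilon\to 0^+$ gives the eigenvector. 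Strong positivity then promotes $r\varphi=L\varphi\in\oo{K}$, so $\varphi\in\oo{K}$.

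For (ii), let $\psi$ be any real eigenvector with $L\psi=r\psi$. Because $\varphi\in\oo{K}$, the quantity $t_*:=\inf\{t\in\R:t\varphi-\psi\in K\}$ is finite and $t_*\varphi-\psi\in K$ by closedness. If this vector were nonzero, strong positivity would yield $L(t_*\varphi-\psi)=r(t_*\varphi-\psi)\in\oo{K}$, contradicting the minimality of $t_*$; hence $\psi=t_*\varphi$. A complex eigenvector reduces to this by separating real and imaginary parts. For algebraic simplicity, I would pass to the dual: the same Krein--Rutman construction applied to $L^*$ on the dual cone $K^*$ produces $\varphi^*\in K^*\setminus\{0\}$ with $L^*\varphi^*=r\varphi^*$, and $\langle\varphi^*,\varphi\rangle>0$ because $\varphi\in\oo{K}$. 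If $(L-rI)\psi=c\varphi$ with $c\neq 0$, then $0=\langle(L^*-rI)\varphi^*,\psi\rangle=\langle\varphi^*,(L-rI)\psi\rangle=c\langle\varphi^*,\varphi\rangle\neq 0$, a contradiction.

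The main obstacle is (iii): suppose $L\psi=\lambda\psi$ with $|\lambda|=r$ and $\lambda\neq r$. My preferred route is Hilbert's projective metric $d_H$ on $\oo{K}$, under which the hypothesis $L(K\setminus\{0\})\subset\oo{K}$ makes $L$ a strict contraction by Birkhoff's theorem, with unique fixed ray $\R_+\varphi$. An alternative uses the order-unit norm $\|x\|_\varphi:=\inf\{t\geq 0:-t\varphi\leq x\leq t\varphi\}$, equivalent to $\|\cdot\|$ because $\varphi\in\oo{K}$: one shows $L$ is a strict contraction on the hyperplane $\{\langle\varphi^*,x\rangle=0\}$, while constancy of $\|L^n\psi\|_\varphi=r^n\|\psi\|_\varphi$ forces $\psi$ into that hyperplane, yielding a contradiction. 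The delicate point either way is transferring the strict-positivity inequality, naturally stated on the real cone, to a usable contractive bound on the complexification where $\psi$ actually lives; this is the genuine technical difficulty that both approaches must address.
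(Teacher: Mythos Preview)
The paper does not prove this theorem: it is stated as a known corollary of the Krein--Rutman theorem, with the proof deferred to \cite[Ch.~1]{Du}. So there is no ``paper's own proof'' to compare against; your sketch is an attempt to reconstruct a standard argument that the paper simply imports.

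Your outline follows the usual route, but it has a genuine error and some gaps. First, the claim ``a cone with nonempty interior is normal'' is false in general (e.g., the cone of nonnegative functions in $C^1[0,1]$ has nonempty interior but is not normal). What is true is that an interior point $u_0$ is an order unit, so one can pick $f\in K^*$ with $f(u_0)>0$ and deduce $\|L^nu_0\|\geq f(L^nu_0)/\|f\|\geq \alpha^n f(u_0)/\|f\|$; normality is not needed. Second, the Schauder step is not correct as written: the set $\{x\in K:\|x\|=1\}$ is typically not convex, so you cannot invoke Schauder on it directly. The standard fix is to intersect $K$ with a hyperplane $\{f=1\}$ for a strictly positive $f\in K^*$, which is closed and convex, and then argue that $x\mapsto (Lx+\varepsilon u_0)/f(Lx+\varepsilon u_0)$ maps it to a relatively compact subset of itself. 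Third, in part (iii) you correctly identify the real difficulty and then stop: neither the Birkhoff contraction nor the order-unit-norm route is carried through, and in particular you do not verify that the projective diameter of $L(K\setminus\{0\})$ is finite, which is what strong positivity plus compactness should deliver. As written, (iii) is a plan, not a proof.
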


\begin{prop}
\label{pr:compact}
If $G(t,s)$ is compact in $C_b(\R^d)$ for $t>s$, then $1$ is a simple eigenvalue of $V(s)$ for each $s$, and it is the unique  eigenvalue on the unit circle. The spectral projection $P(s) = I - Q(s)$ is given by 
$$P(s)\varphi(x) = \int_{\R^d} \varphi(y)\mu_s(dy), \quad \varphi \in C_b(\R^d), \;s\in \R , x\in \R^d, $$
where $\{ \mu_s :\;s\in \R\}$ is a $T$-periodic  evolution system of measures. 
\end{prop}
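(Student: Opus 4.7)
The plan is to apply the Krein--Rutman corollary (Theorem \ref{KR}) to each $V(s)$, identify the leading eigenspace as the constants, and finally read off the spectral projection $P(s)$ as integration against a probability measure.

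First, I take $K = \{\varphi \in C_b(\R^d):\;\varphi \geq 0\}$, whose interior $\oo{K}$ consists of the functions with $\inf_{\R^d}\varphi > 0$. By Remark \ref{rem:comp}(iv), $V(s) = G(s+T,s)$ sends $K\setminus\{0\}$ into $\oo{K}$, and $V(s)$ is compact by hypothesis. Since $V(s)\one = \one$, its spectral radius equals $1$, so Theorem \ref{KR} yields that $1$ is a simple eigenvalue of $V(s)$ and that every other eigenvalue of $V(s)$ has modulus strictly less than $1$. Its eigenspace is $\R\cdot\one$, hence the spectral projection $P(s) = I - Q(s)$ has one-dimensional range and can be written as $P(s)\varphi = \ell_s(\varphi)\one$ for a bounded linear functional $\ell_s$ on $C_b(\R^d)$ normalized by $\ell_s(\one) = 1$.

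The next step is to realize $\ell_s$ as integration against a Borel probability measure. The $T$-periodicity of the coefficients gives $G(t+T, s+T) = G(t,s)$, whence $V(s)^n = G(s+nT, s)$. Combining the decomposition $\varphi = P(s)\varphi + Q(s)\varphi$ with the exponential decay \eqref{decexp} I obtain $V(s)^n\varphi \to \ell_s(\varphi)\one$ uniformly as $n\to\infty$; in particular $\ell_s(\varphi) \geq 0$ whenever $\varphi \geq 0$. Evaluating at any fixed $x_0 \in \R^d$,
$$\int_{\R^d}\varphi(y)\, p_{s+nT, s, x_0}(dy) = (V(s)^n\varphi)(x_0) \longrightarrow \ell_s(\varphi).$$
By the compactness of $V(s)$ and Remark \ref{rem:comp}(ii), the sequence $\{p_{s+nT, s, x_0}\}_{n\geq 1}$ is tight; Prokhorov's theorem then produces a weakly convergent subsequence, and since every possible weak limit must coincide with $\ell_s$, the whole sequence converges weakly to a Borel probability measure $\mu_s$ satisfying $\ell_s(\varphi) = \int_{\R^d}\varphi\,d\mu_s$.

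To finish, periodicity gives $V(s+T) = V(s)$, hence $P(s+T) = P(s)$ and $\mu_{s+T} = \mu_s$. The intertwining $G(t,s)V(s) = V(t)G(t,s)$, which follows at once from $G(t+T, s+T) = G(t,s)$, commutes with the resolvents of $V(s)$ and $V(t)$ and therefore with the Dunford integral defining $P(\cdot)$, giving $P(t)G(t,s) = G(t,s)P(s)$. Applied to $\varphi \in C_b(\R^d)$ this reads $\ell_t(G(t,s)\varphi)\one = \ell_s(\varphi)\one$, i.e.
$$\int_{\R^d} G(t,s)\varphi\, d\mu_t = \int_{\R^d}\varphi\, d\mu_s,\qquad t>s,$$
which is precisely \eqref{invariance}. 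I expect the main technical point to be the passage from the positive linear functional $\ell_s$ on $C_b(\R^d)$ to a countably additive Borel probability measure on $\R^d$; this is exactly where the compactness hypothesis, via the tightness of $\{p_{s+nT, s, x_0}\}$ supplied by Remark \ref{rem:comp}(ii), is indispensable.
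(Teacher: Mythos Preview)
Your proof is correct and follows the paper's argument through the Krein--Rutman step and the identification $P(s)\varphi=\ell_s(\varphi)\one$, but you take a different route to show that $\ell_s$ is represented by a Borel probability measure. The paper uses the Stone--Daniell theorem: it checks that $m_s$ is positive (via the resolvent representation $P(s)=\lim_{\lambda\to 1^-}(\lambda-1)(\lambda I-V(s))^{-1}$) and that $\varphi_n\downarrow 0$ implies $m_s\varphi_n\to 0$ (via tightness of $\{p_{s+T,s,x}\}$ and Dini's theorem, which forces $V(s)\varphi_n\to 0$ uniformly, hence $P(s)\varphi_n=V(s)P(s)\varphi_n=P(s)V(s)\varphi_n\to 0$). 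You instead realise $\ell_s$ as the weak limit of the concrete probability measures $p_{s+nT,s,x_0}$, using \eqref{decexp} for the convergence $V(s)^n\varphi\to\ell_s(\varphi)\one$ and Remark~\ref{rem:comp}(ii) plus Prokhorov for tightness. Your argument is arguably more direct and makes the probabilistic content explicit (the measure $\mu_s$ visibly arises as a limit of transition probabilities), while the paper's Stone--Daniell approach stays entirely within functional analysis and avoids invoking \eqref{decexp} at this stage. Both rely on compactness through the tightness of the kernels $p_{t,s,x}$; the evolution-system and periodicity verifications at the end are handled the same way in both proofs.
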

\begin{proof}
Let $K= \{\varphi \in C_b(\R^d):\; \forall x\in \R^d, \;\varphi(x) \geq 0\}$ be the cone of the nonnegative functions in $C_b(\R^d)$. By Remark \ref{rem:comp}(iv),  if $\varphi \in C_b(\R^d)\setminus \{0\}$ is such that $\varphi(x)\geq 0$ for each $x$, then 
$\inf V(s)\varphi   = \inf G(s+T,s)\varphi(x) >0$. In other words, $V(s)$ maps   $K\setminus \{0\}$ into the interior part of $K$. 
Theorem \ref{KR}  implies that the spectral radius $1$ of $V(s)$ is a simple eigenvalue, and it is  the unique eigenvalue of $V(s)$ on the unit circle. The associated spectral projection  $P(s) = I- Q(s)$, with $Q(s)$ defined above, may be expressed as
 $$P(s)\varphi = m_s\varphi \, \one , \quad \varphi\in C_b(\R^d) , $$
for some $m_s$ in the dual space of $C_b(\R^d)$. To prove that $m_s\varphi = \int_{\R^d} \varphi(y)\mu_s(dy)$ for some   measure $\mu_s$ we use the Stone--Daniell Theorem (e.g., \cite[Thm.~4.5.2]{Dudley}): it is enough to check that $m_s\varphi \geq 0$ if $\varphi \geq 0$, and that 
for each sequence  $(\varphi_n) \subset C_b(\R^d)$   such that   $(\varphi_n(x))$ is decreasing and converges to $0$ for each $x\in \R^d$, we have $\lim_{n \to \infty} m_s\varphi_n  = 0$. In this case,  $\mu_s$ is a probability measure for every $s$, because $P(s)\one = \one$. 

By the general spectral theory, $P(s) = \lim_{\lambda \to  1^-}V_{\lambda}$, where
$V_{\lambda}:=  (\lambda -1)(\lambda I - V(s))^{-1}$. In its turn,  $(\lambda I - V(s))^{-1} = \sum_{k=0}^{\infty}V(s)^k/\lambda^{k+1}$ maps nonnegative functions into nonnegative functions because  $V(s)$ does. Therefore, $m_s\varphi \, \one = P(s)\varphi \geq 0$ for each $\varphi \geq 0$. 

Let now  $\varphi_n \downarrow 0$. We claim that $V(s) \varphi_n$ converges to $0$ uniformly. Indeed, since the measures $\{ p_{s+T,s,x}:\;x\in \R^d\}$ are tight, for each $\varepsilon >0$ there is $R>0$ such that $\int_{\R^d\setminus B(0,R)}  p_{s+T,s,x}(dy) \leq \eps$, for each $x\in \R^d$. On the other hand, $\varphi_n$ converges to $0$ uniformly on $B(0,R)$ by the Dini Monotone Convergence Theorem, so that for $n$ large, say $n\geq n_0$, we have $  \varphi_n(y) \leq \eps$, for $|y|\leq R$.  Therefore, for $n\geq n_0$ we have
$$\begin{array}{lll}
0\leq   V(s)  \varphi_n(x) & = & \displaystyle{\int_{B(0,R)}  \varphi_n(y)p_{s+T,s,x}(dy) +  \int_{\R^d\setminus B(0,R)}  \varphi_n(y)p_{s+T,s,x}(dy) }
\\
\\
& \leq &\eps + \|  \varphi_n\|_{\infty} \eps 
\end{array}$$
for each $x\in \R^d$. 
Since $V(s) \varphi_n$ converges to $0$ uniformly, then $P(s)V(s)  \varphi_n = V(s) P(s) \varphi_n$ converges to $0$ uniformly. But $V(s)$ is the identity on the range of $P(s)$. Then, $P(s) \varphi_n$ converges uniformly to  $0$, which implies that  $\lim_{n \to \infty} m_s\varphi_n  = 0$. 

Let us prove that $\{ \mu_s :\;s\in \R\}$ is a $T$-periodic  evolution system of measures. Since $s\mapsto P(s)$ is $T$-periodic, then $\mu_s = \mu_{s+T}$ for each $s\in \R$. Moreover, since $V(t)G(t,s) = G(t,s)V(s)$, then $P(t)G(t,s)\varphi =G(t,s)P(s)\varphi$, for each 
$\varphi\in C_b(\R^d)$. This means   
$$\int_{\R^d} G(t,s)\varphi \,d\mu_t \one =  G(t,s) \bigg( \int_{\R^d} \varphi \, d\mu_s \,\one \bigg) , \quad \varphi\in  C_b(\R^d),$$
and since $G(t,s)\one = \one$, then 
$$\int_{\R^d} G(t,s)\varphi \,d\mu_t =   \int_{\R^d} \varphi \, d\mu_s,  \quad \varphi\in  C_b(\R^d),$$
so that $\{ \mu_s :\;s\in \R\}$ is an  evolution system of measures. \end{proof}

\begin{cor}
\label{decay_infty}
Assume that  $G(t,s)$ is compact in $C_b(\R^d)$ for $t>s$. Then:
\begin{itemize}
\item[(i)] There exists a unique $T$-periodic evolution system of measures $\{ \mu_s :\;s\in \R\}$;
\item[(ii)] Setting $\omega_0 = \log r /T $, where $r $ is defined in \eqref{r}, for each $\omega >\omega_0$ there exists $M = M(\omega)>0$ such that 
\begin{equation}
\| G(t,s)\varphi -   \int_{\R^d} \varphi \, d\mu_s\|_{\infty} \leq M  e^{\omega  (t-s)}\|\varphi\|_{\infty}, \quad t>s, \;\varphi\in C_b(\R^d),
\label{decexp1}
\end{equation}
while for $\omega<\omega_0$ there is no $M$ such that  \eqref{decexp1} holds. 
\end{itemize}
\end{cor}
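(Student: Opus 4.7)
For part (i), existence of a $T$-periodic evolution system of measures is already furnished by Proposition~\ref{pr:compact}, so only uniqueness is at issue. I would take a second $T$-periodic evolution system $\{\nu_s:s\in\R\}$, observe that the composition rule together with the time-periodicity of the coefficients yields $G(s+nT,s)=V(s)^n$, and then apply the defining relation \eqref{invariance} with $t=s+nT$, combined with $\nu_{s+nT}=\nu_s$, to obtain
\[
\int_{\R^d} V(s)^n\varphi\,d\nu_s \;=\; \int_{\R^d}\varphi\,d\nu_s, \qquad \varphi\in C_b(\R^d),\ n\in\N.
\]
Writing $V(s)^n\varphi = m_s\varphi\cdot\one + V(s)^n Q(s)\varphi$ via Proposition~\ref{pr:compact} (using $V(s)\one=\one$) and invoking \eqref{decexp} with $t-s=nT$, the remainder $V(s)^n Q(s)\varphi$ tends to $0$ uniformly as $n\to\infty$. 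Passing to the limit in the identity above gives $\int\varphi\,d\nu_s = m_s\varphi = \int\varphi\,d\mu_s$ for every $\varphi\in C_b(\R^d)$, hence $\nu_s=\mu_s$.

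For the upper bound in part (ii), I would write
\[
G(t,s)\varphi \;-\; \int_{\R^d}\varphi\,d\mu_s \;=\; G(t,s)P(s)\varphi + G(t,s)Q(s)\varphi - m_s\varphi.
\]
Since $P(s)\varphi = m_s\varphi\cdot\one$ is a constant function and $G(t,s)\one=\one$, the first term on the right equals $m_s\varphi$, and the difference reduces to $G(t,s)Q(s)\varphi$. Given any $\omega>\omega_0:=\log r/T$, I would choose $\varepsilon := T(\omega-\omega_0)>0$ in \eqref{decexp} so that $(\log r+\varepsilon)/T=\omega$; this immediately yields \eqref{decexp1} with $M=M_\varepsilon$.

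For the sharpness, I would argue by contradiction. If \eqref{decexp1} held for some $\omega<\omega_0$ and some $M$, then restricted to $\ker P(s)=\mathrm{Range}(Q(s))$, on which $m_s\varphi=0$, it would read $\|G(t,s)\varphi\|_\infty\leq M e^{\omega(t-s)}\|\varphi\|_\infty$; specialising to $t=s+nT$ gives the operator bound $\|V(s)^n|_{\ker P(s)}\|\leq Me^{\omega nT}$, so the spectral radius of $V(s)|_{\ker P(s)}$ would be at most $e^{\omega T}<r$.

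This final step is where I expect the main obstacle: one needs the supremum $r$ in \eqref{r} to be actually attained by the modulus of some eigenvalue $\lambda\neq 1$ of $V(s)$, which is not automatic but follows from compactness of $V(s)$ (ensuring $\sigma(V(s))\setminus\{0\}$ consists of isolated eigenvalues accumulating only at $0$) together with Proposition~\ref{pr:compact} (which confines all unimodular spectral points to $\{1\}$). The corresponding eigenfunction lies in $\ker P(s)$, producing the desired contradiction with the spectral radius bound and completing the proof.
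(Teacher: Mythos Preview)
Your proposal is correct and follows essentially the same route as the paper: the upper bound in (ii) comes directly from \eqref{decexp} via the identity $G(t,s)\varphi - m_s\varphi = G(t,s)Q(s)\varphi$, and both the sharpness and the uniqueness in (i) rest on the same two ingredients---the decay $\|V(s)^nQ(s)\varphi\|_\infty\to 0$ and the existence (by compactness) of an eigenvalue of modulus exactly $r$. The only cosmetic difference is that for uniqueness the paper fixes $t$ and sends $s\to -\infty$ (integrating against $\nu_t$), whereas you fix $s$ and send $t=s+nT\to +\infty$ (integrating against $\nu_s$); both are the same limit argument.
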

\begin{proof}
Let $\{ \mu_s :\;s\in \R\}$ be the $T$-periodic evolution system of measures given by Proposition \ref{pr:compact}. Since $P(s) \varphi   =\int_{\R^d} \varphi \, d\mu_s \one$, estimate \eqref{decexp} implies \eqref{decexp1}.  Since there exist eigenvalues of $V(s) $ with modulus $r$, 
\eqref{decexp1} cannot hold for  $\omega<\omega_0$.  Indeed, if $V(s)\varphi = r\varphi $ then $P(s)\varphi  =0$ and $G(s+nT,s)\varphi = r^n\varphi = e^{\omega_0nT}\varphi$ for each $n\in \N$, so that $\| G(s+nT, s)\varphi -  m_s \varphi  \|_{\infty} = \| G(s+nT,s)\varphi    \|_{\infty} = 
e^{\omega_0nT}\|\varphi\|_{\infty}$.

If $\{ \nu_s :\;s\in \R\}$ is another $T$-periodic evolution system of measures, fix $t\in \R$ and $\varphi \in C_b(\R^d)$. Since $G(t,s)\varphi -   \int_{\R^d} \varphi  \, d\mu_s$ goes to zero uniformly as $s\to -\infty$, then 
$$ 0 = \lim_{s\to -\infty} \int_{\R^d} \bigg( G(t,s)\varphi -   \int_{\R^d} \varphi \, d\mu_s\bigg) d\nu_t =   \lim_{s\to -\infty}\bigg(  \int_{\R^d}  \varphi \, d\nu_s -   \int_{\R^d} \varphi \, d\mu_s\bigg).$$
Since $s\mapsto  \int_{\R^d}  \varphi \, d\nu_s -   \int_{\R^d} \varphi \, d\mu_s$ is $T$-periodic and goes to $0$ as $s\to -\infty$, it vanishes in $\R$. By the arbitrariness of  $\varphi$,  $\nu_s = \mu_s$ for every $s\in \R$. 
\end{proof}

Once we have an evolution system of measures $\{ \mu_s :\;s\in \R\}$, $G(t,s)$ is extendable to a contraction (still denoted by $G(t,s)$) from $L^p(\R^d, \mu_s)$ to $L^p(\R^d, \mu_t)$ for $t>s$. 
Compactness and asymptotic behavior results  in $C_b(\R^d)$ imply compactness and asymptotic behavior results in such $L^p$ spaces, as the next proposition shows.

\begin{prop}
\label{comp_as_Lp}
Let $G(t,s)$ be compact in $C_b(\R^d)$ for $t>s$. Then  for every $p\in (1, \infty)$,  $G(t,s): L^p(\R^d, \mu_s)\mapsto L^p(\R^d, \mu_t)$ is compact  for $t>s$. Moreover, for every $\omega \in (\omega_0, 0)$ and $p\in (1, \infty)$ there exist $M = M(\omega, p)>0$ such that 
\begin{equation}
\| G(t,s)\varphi -   \int_{\R^d} \varphi \, d\mu_s\|_{L^p(\R^d, \mu_t)} \leq M e^{\omega (t-s)}\|\varphi\|_{L^p(\R^d, \mu_s)}, \quad t>s, \;\varphi\in L^p(\R^d, \mu_s), 
\label{decexp2}
\end{equation}
and for every $\omega<\omega_0$ there is no $M$ such that \eqref{decexp2} holds. Here $\omega_0 $ is given by Corollary \ref{decay_infty}(ii). 
\end{prop}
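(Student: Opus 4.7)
The argument has three stages: extend $G(t,s)$ to an $L^p$-contraction, reduce \eqref{decexp2} via the $T$-periodic structure to an exponential bound for the period map $V(s)$, and finally transfer the $C_b$ spectral picture from Corollary \ref{decay_infty} to $L^p(\mu_s)$.

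\textbf{Extension and periodic reduction.} For $\varphi \in C_b(\R^d)$, Jensen's inequality applied to the probability kernel $p_{t,s,x}$ gives $|G(t,s)\varphi|^p \le G(t,s)(|\varphi|^p)$; integrating against $\mu_t$ and using \eqref{invariance} yields $\|G(t,s)\varphi\|_{L^p(\mu_t)} \le \|\varphi\|_{L^p(\mu_s)}$, so $G(t,s)$ extends by density to a contraction $L^p(\mu_s) \to L^p(\mu_t)$. By $T$-periodicity of the coefficients, $V(s+T)=V(s)$, hence $V(s)^n = G(s+nT,s)$ and $\mu_{s+nT}=\mu_s$. For $s+nT \le t$, using $G(t,s+nT)\one=\one$ and the invariance $m_{s+nT}(V(s)^n\varphi) = m_s\varphi$,
\begin{equation*}
G(t,s)\varphi - m_s\varphi\cdot\one = G(t,s+nT)\bigl(V(s)^n\varphi - m_s\varphi\cdot\one\bigr),
\end{equation*}
and $L^p$-contractivity of $G(t,s+nT)$ reduces \eqref{decexp2} to an exponential $L^p$-estimate for $V(s)^n - P(s)$ on the closed subspace $L^p_0(\mu_s) := \ker m_s$.

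\textbf{Spectral transfer and $L^p$-compactness.} I would prove that $V(s)$ is compact on $L^p(\mu_s)$ with nonzero spectrum identical to its nonzero $C_b$-spectrum; then the spectral radius on $L^p_0(\mu_s)$ is $\le r$, and the desired decay follows from the spectral projection bound, argued exactly as for \eqref{decexp} but in the $L^p$-setting. For the spectrum identification, any nonzero eigenvalue $\lambda$ with eigenfunction $\varphi \in L^p(\mu_s)$ satisfies $\varphi = \lambda^{-1}V(s)\varphi$; the strong Feller property of $G(s+T,s)$ (Theorem \ref{Th:G(t,s)}(iv)) together with the local interior estimate Theorem \ref{Th:G(t,s)}(vi) shows $V(s)\varphi \in C_b(\R^d)$, so $\varphi\in C_b(\R^d)$ and $\lambda$ is a $C_b$-eigenvalue, forcing $|\lambda|\le r$ unless $\lambda=1$. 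For the $L^p$-compactness I would exploit the identity
\begin{equation*}
G(t,s) - P(s) = \bigl(G(t,r) - P(r)\bigr)\bigl(G(r,s) - P(s)\bigr), \quad s<r<t,
\end{equation*}
(verified directly from $G(t,r)\one=\one$ and $m_r\circ G(r,s)=m_s$), which reduces compactness of $G(t,s)$ on $L^p$ to compactness of one factor. The $C_b$-compactness handles the restriction to bounded data, while a density plus contraction argument, combined with the exponential decay just proved, controls the residual coming from general $L^p$-functions.

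\textbf{Sharpness and main obstacle.} The lower bound is immediate: if $V(s)\varphi_0 = \lambda\varphi_0$ in $C_b$ with $|\lambda|=r$ and $m_s\varphi_0=0$, then $\|V(s)^n\varphi_0\|_{L^p(\mu_s)} = r^n\|\varphi_0\|_{L^p(\mu_s)}$, excluding any rate $\omega<\omega_0$. The main technical obstacle is the $L^p$-compactness of $V(s)$: unlike bounded sequences in $C_b$, bounded sequences in $L^p(\mu_s)$ need not be uniformly integrable, so naive truncation to bounded functions does not yield uniform error control, and one must combine the Markov-kernel structure (via Jensen) with the exponential decay in a careful bootstrap to handle the tails of general $L^p$-data.
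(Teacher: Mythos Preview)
Your proposal has two genuine gaps.

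First, in the spectral transfer step you claim that for an $L^p(\mu_s)$-eigenfunction $\varphi$ of $V(s)$, the strong Feller property together with the interior estimate forces $V(s)\varphi\in C_b(\R^d)$. But Theorem~\ref{Th:G(t,s)}(iv) only asserts that $G(t,s)$ maps $L^\infty$ into $C_b$, and (vi) presupposes $\varphi\in C_b$; neither applies to a general $\varphi\in L^p(\mu_s)$. Without an extra kernel bound of the type $\sup_x\|g(s+T,s,x,\cdot)/\rho_s\|_{L^{p'}(\mu_s)}<\infty$ (which is not available here), there is no reason why $V(s)\varphi$ should even be bounded. Hence your argument neither excludes $L^p$-eigenvalues of modulus in $(r,1)$ nor rules out essential spectrum.

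Second, the logic is circular. You want the exponential decay on $L^p_0(\mu_s)$ from the spectral radius of $V(s)|_{L^p_0}$ being at most $r$, but Gelfand's formula gives this only once $\sigma(V(s)|_{L^p_0})\subset\overline{B(0,r)}$; controlling eigenvalues alone is not enough without compactness of $V(s)$ in $L^p(\mu_s)$. You then propose to obtain that very $L^p$-compactness using ``the exponential decay just proved'', closing the loop.

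The paper breaks the circle by establishing $L^p$-compactness \emph{first}, and by a different mechanism. Splitting $G(t,s)=G(t,r)G(r,s)$ with $r=(t+s)/2$, the factor $G(r,s)$ is bounded from $L^\infty$ to $C_b$ by strong Feller and $G(t,r)$ is compact in $C_b$, so $G(t,s)$ is compact in $L^\infty$. Since it is also a contraction $L^1(\mu_s)\to L^1(\mu_t)$, a compactness interpolation lemma (as in \cite{MPW}) yields compactness $L^p(\mu_s)\to L^p(\mu_t)$ for every $p\in(1,\infty)$. With $V(s)$ compact on $L^p(\mu_s)$, its nonzero spectrum consists of eigenvalues; the $p$-independence of these eigenvalues and of the associated spectral projections is then taken from a general result for Markov operators \cite[Cor.~1.6.2]{davies}, not from the ad hoc bootstrap you sketch. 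The remaining steps---exponential decay from the spectral radius and uniformity of the constant in $s$ via periodicity---are as you outline.
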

\begin{proof}
Let us prove that $G(t,s): L^p(\R^d, \mu_s)\mapsto L^p(\R^d, \mu_t)$ is compact  for $t>s$. 
We have 
$G(t,s) = G(t, (t+s)/2) G((t+s)/2, s)$ where $G((t+s)/2, s)$ is bounded from $L^{\infty}(\R^d)$ to $C_b(\R^d)$ by Theorem \ref{Th:G(t,s)}(iv), and $G(t, (t+s)/2)$ is compact in $C_b(\R^d)$. Therefore, $G(t,s)$ is compact in $L^{\infty}(\R^d)$. 

Now, if $\mu_1$ and $\mu_2$ are probability measures and a linear operator is bounded from $L^1(\R^d, \mu_1)$ to $L^1(\R^d, \mu_2)$ and compact from $L^{\infty}(\R^d, \mu_1)$ to $L^{\infty}(\R^d, \mu_2)$, then it is compact from $L^p(\R^d, \mu_1)$ to $L^p(\R^d, \mu_2)$, for every $p\in (1, \infty)$ (the proof is the same as in \cite[Prop.~4.6]{MPW}, where only one probability measure was considered). 

Let us prove \eqref{decexp2}. Since   $L^{\infty}(\R^d, \mu_t)  = L^{\infty}(\R^d, \mu_s)   = L^{\infty}(\R^d, dx)$ by \cite[Prop.~5.2]{KLL}, interpolating   \eqref{decexp1}
and $\| G(t,s)\varphi -   m_s\varphi\|_{L^1(\R^d, \mu_t)} \leq 2\|\varphi\|_{L^1(\R^d , \mu_s)} $ we obtain that 
$\| G(t,s)  -  m_s  \|_{{\mathcal L}(L^p(\R^d, \mu_s), L^p(\R^d , \mu_t))} $ decays exponentially as $(t-s)\to \infty$. However, the decay rate that we obtain by interpolation depends on $p$. To prove \eqref{decexp2} it is enough to show  that the spectrum of the operators $V(s)$ in $L^p(\R^d, \mu_s)$ does not depend on $s$, and coincides with the spectrum in $C_b(\R^d)$. Since $V(s) = G(s+T,s)$, then $V(s)$ is compact  in $L^p(\R^d, \mu_s)$. Therefore, its $L^p$ spectrum (except zero) consists of eigenvalues, that are independent of $s$. 
They are independent of $p$ too, as well as the associated spectral projections, by \cite[Cor.~1.6.2]{davies}. 

The statement follows now as in the case of  evolution operators in a fixed  Banach space as in the mentioned references \cite{Henry,L,F}. Note however that our Banach spaces $L^p(\R^d, \mu_s)$ vary with $s$, so that the classical theory  cannot be used verbatim. For the reader's convenience we give the proof below.

Let  $t-s = \sigma + kT$, with $k\in \N$ and $\sigma \in [0, T)$. We have 
$$\begin{array}{l}
\|G(t,s)\varphi -   m_s \varphi \|_{L^p(\R^d, \mu_t)} = \|G(t, t-\sigma) V(s)^k (I-P(s))\varphi \|_{L^p(\R^d, \mu_t)}
\\
\\
\leq \| V(s)^k (I-P(s))\varphi \|_{L^p(\R^d, \mu_s)} = \| [V(s)(I-P(s))]^k \varphi \|_{L^p(\R^d, \mu_s)}.
\end{array}$$
For $\omega>\omega_0$ let $\eps>0$ be such that $\log (r+\eps)\leq \omega $, and let 
 $k(s)\in \N$ be such that $ \| [V(s)  (I-P(s))]^k\varphi \|_{L^p(\R^d, \mu_s)}\leq (r+\eps)^k$ for each $k > k(s)$. Therefore, if the integer part $[(t-s)/T]$ is larger than $k(s)$ we have
$$\|G(t,s)\varphi -  m_s\varphi \|_{L^p(\R^d, \mu_t)} \leq (r+\eps)^k\|\varphi \|_{L^p(\R^d, \mu_s)} 
\leq e^{(t-s) \omega} \|\varphi \|_{L^p(\R^d, \mu_s)}, $$
for each $  \varphi  \in L^p(\R^d, \mu_s)$. Using the obvious inequality  $\|G(t,s)\varphi -   m_s\varphi \|_{L^p(\R^d, \mu_t)} \leq 2 \|\varphi \|_{L^p(\R^d, \mu_s)}$ for 
$[(t-s)/T]\leq k(s)$ we arrive at 
$$\|G(t,s)\varphi -  m_s\varphi \|_{L^p(\R^d, \mu_t)} \leq M_s e^{(t-s) \omega} \|\varphi \|_{L^p(\R^d, \mu_s)}, \quad \varphi  \in L^p(\R^d, \mu_s)$$
for some $M_s>0$.  It remains to show that $M_s$ can be taken independent of $s$. Since $V$ is $T$-periodic, we may take $k(s)= k(s+T)$  and hence $M_s = M_{s+T}$ for every $s\in \R$. Therefore it is enough to show that $M_s$ can be taken independent of $s$ for $s\in [0,T)$. 
For $0\leq s<T$ and $t\geq T$ we have $m_T  G(T,s)\varphi  = \int_{\R^d} G(T,s)\varphi\,d\mu_T = m_s\varphi$,  hence
$$\begin{array}{l}
\|G(t,s)\varphi -  m_s\varphi  \|_{L^p(\R^d, \mu_t)} = \| (G(t, T)-m_T) G(T,s)\varphi  \|_{L^p(\R^d, \mu_t)}
\\
\\
\leq M_Te^{\omega(t-T)} \| G(T,s)\varphi \|_{L^p(\R^d, \mu_T)} \leq  M_Te^{|\omega| T}e^{\omega(t-s)} \|  \varphi \|_{L^p(\R^d, \mu_s)}
\end{array}$$
So, we can take $M_s = M_Te^{|\omega| T}$ for $0\leq s<T$. \eqref{decexp2} follows. 
\end{proof}


\end{document}